\numberwithin{equation}{section}
\newcommand{\bfI}{\boldsymbol{I}}
\newcommand{\tn}{|||}
\newcommand{\mcN}{\mathcal{N}}
\newcommand{\mcT}{\mathcal{T}}
\newcommand{\nablas}{\nabla_\Gamma}
\newcommand{\IR}{\mathbb{R}}
\newtheorem{lem}{Lemma}[section]
\newtheorem{thm}{Theorem}[section]
\newtheorem{rem}{Remark}[section]
\newenvironment{proof}{\noindent \newline {\bf Proof.}}
{\hfill \mbox{\fbox{} } \newline}
\title{\bf A Simple Finite Element Method for Elliptic 
Bulk Problems with Embedded Surfaces\thanks{This research was
  supported in part by the Swedish Foundation for Strategic Research
  Grant No.\ AM13-0029, the Swedish Research Council Grants Nos.\
  2011-4992, 2013-4708, and the Swedish Research Programme
  Essence. The first author was supported in part by the EPSRC grant EP/P01576X/1.}}
\date{\today}
\author{Erik Burman\footnote{Department of Mathematics, University 
 College London, Gower Street, London WC1E 6BT, UK, {\tt e.burman@ucl.ac.uk}.}
 \mbox{ } 
Peter Hansbo\footnote{Department of Mechanical Engineering, J\"onk\"oping University, 
SE-55111 J\"onk\"oping, Sweden, {\tt Peter.Hansbo@ju.se}.}
 \mbox{ }  
Mats G. Larson\footnote{Department of Mathematics and Mathematical
  Statistics, Ume{\aa} University, SE-90187 Ume{\aa}, Sweden, {\tt mats.larson@umu.se}. }
}
\begin{document}

\maketitle

\begin{abstract}
In this paper we develop a simple finite element method for simulation of embedded 
layers of high permeability in a matrix of lower permeability using a basic model of 
Darcy flow in embedded cracks. The cracks are allowed to cut through the mesh 
in arbitrary fashion and we take the 
flow in the crack into account by superposition. The fact that we use continuous elements 
leads to suboptimal convergence due to the loss of regularity across the crack. We 
therefore refine the mesh in the vicinity of the crack in order to recover optimal order 
convergence in terms of the global mesh parameter. The proper degree of refinement is 
determined based on an a priori error estimate and can thus be performed before the 
actual finite element computation is started. Numerical examples showing this effect 
and confirming the theoretical results are provided. The approach is easy to implement 
and beneficial for rapid assessment of the effect of crack orientation and may for example 
be used in an optimization loop.
\end{abstract}

\section{Introduction}

\paragraph{New Contributions.}
In this contribution we consider a basic elliptic problem with an embedded 
interface with high permeability, which may be used to model the pressure in 
a medium with cracks or the temperature in composite materials. Our approach 
is to use a continuous piecewise linear finite element space and simply insert 
this space into the weak formulation of the continuous problem which consists 
of a sum of a form on the bulk domain and a form on the interface. Note that 
the interface cuts through the mesh in an arbitrary way but we avoid using 
computations on cut elements and instead compensate the lack of regularity 
across the interface using a mesh which is adapted close to the interface. This 
approach leads to a scheme which is very easy to implement.

We derive a priori error estimates which shows that the meshsize for elements 
close to the interface $h_\Gamma \sim h^2$ where $h$ is the global mesh parameter 
used in the bulk mesh. Such a pre-refinement of the mesh leads to optimal order 
a priori error estimates in terms of the global mesh parameter. Note that no adaptive 
algorithm is used instead we just split elements that intersect the interface until they 
are small enough. We start with a quasi uniform mesh and refine to obtain a 
conforming locally quasi uniform mesh for instance using an edge bisection algorithm. 
   
In forthcoming work we consider schemes using cut elements which does not 
require adaptive mesh refinement  and also works for higher order elements. The 
method proposed here is however attractive due to its simplicity and may be an 
interesting alternative in situations where one does need very accurate solutions 
for instance in the presence of uncertainties or very complicated networks of 
interfaces, or for optimization purposes.

\paragraph{Earlier Work.} The model we use is essentially the one proposed by 
Capatina et al. \cite{CaLuElBa16}. More sophisticated models have been proposed, e.g., 
in \cite{AnBoHu09,FoFuScRu14,FrRoJeSa08,MaJaRo05}, in particular allowing for jumps in the solution
across the interfaces. To allow for such jumps, one can either align the mesh with the interfaces, as in, e.g., \cite{HaAsDaEiHe09}, or use extended finite element techniques, cf. 
\cite{BurClaHan15,CaLuElBa16,DASc12,DeFuSc17}. Our approach, using a continuous approximation, does not allow for
jumps, but we shall return to this question in a companion paper. 

The approach of superimposing lower dimensional structures 
independently of the mesh was recently introduced in the context of structural mechanics in \cite{BuHaLa17,CeHaLa16}.

\paragraph{Outline.} In Section 2 we formulate the model problem, its weak form, 
and investigate the regularity properties of the solution, in Section 3 we formulate 
the finite element method, in section 4 we derive error estimates, and in Section 5 
we present numerical examples including a study of the convergence and a more 
applied example with a network of cracks.

\section{Model Problem}

\paragraph{Strong Formulation.}
Let $\Omega$ be a convex polygonal domain in $\IR^d$, with  
$d=2$ or $3$. Let $\Gamma$ be a smooth embedded interface
in the interior of $\Omega$ without boundary. Then  $\Gamma$ 
partitions $\Omega$ into two subdomains $\Omega_1$ and 
$\Omega_2$, where $\Omega_2$ is the domain enclosed by 
$\Gamma$. Let $n_i$ be the exterior unit normal to $\Omega_i$. 
See Figure \ref{fig:domains}.

Consider the problem: find $u:\Omega \rightarrow \IR$ such that 
\begin{alignat}{3}\label{eq:strong-problem-a}
-\nabla \cdot a \nabla u &= f,\qquad &\text{in $\Omega$}
\\ \label{eq:strong-problem-b}
-\nabla_\Gamma \cdot a_\Gamma  \nabla_\Gamma u &= f_\Gamma 
+ [n \cdot a \nabla u] ,\qquad &\text{on $\Gamma$}
\\ \label{eq:strong-problem-c}
[u] &= 0,\qquad &\text{on $\Gamma$}
\\ \label{eq:strong-problem-d}
u &=0, \qquad  &\text{on $\partial \Omega$}
\end{alignat}
where $a|_{\Omega_i} = a_i$ are given constants, and 
$f\in L^2(\Omega)$,  $f_\Gamma \in L^2(\Gamma)$ are given functions. 
We also used the notation  $\nabla_\Gamma = P \nabla$ for the tangential 
gradient where $P= I - n \otimes n$ is the tangent projection. The jump in 
the primal variable  across the interface is defined for $x \in
\Gamma$ by $[v] :=
\lim_{\epsilon \rightarrow 0^+} (v(x +\epsilon n_1) - v(x +\epsilon n_2))$ and that of 
the normal flux is defined by
$[n\cdot a \nabla v]= n_1 \cdot a_1 \nabla v_1 + n_2 \cdot a_2 \nabla v_2$, 
where we recall that $n_2 = - n_1$ on $\Gamma$.
%
\paragraph{Function Spaces.} We adopt the usual notation $H^s(\omega)$ for the 
Sobolev space of order $s$ on the set $\omega$ and we have the special spaces 
$H^1_0(\omega) = \{ v \in H^1 (\omega) : \text{$v = 0$ on $\partial \omega$} \}$ 
and $L^2(\omega) = H^0(\omega)$. For a normed vector space $V$ we let 
$\| \cdot \|_V$ denote the norm on $V$ and we use the simplified notation 
$\|v\|_{L^2(\omega)} = \| v \|_\omega$. We denote the $L^2$-scalar
product over $\omega \subset \mathbb{R}^d$ or $\omega \subset
\mathbb{R}^{d-1}$ by $(\cdot,\cdot)_\omega$.

\paragraph{Weak Formulation.}
Multiplying (\ref{eq:strong-problem-a}) by $v\in V = H^1_0(\Omega)\cap H^1(\Gamma)$ 
and using Green's formula we obtain the weak form
\begin{align}\label{eq:weak-start}
(f,v)_\Omega 
&= \sum_{i=1}^2 (-\nabla \cdot a_i \nabla  u_i, v_i)_{\Omega_i}
\\
&= \sum_{i=1}^2 (a_i \nabla  u_i, \nabla v_i)_{\Omega_i} - (n_i \cdot a \nabla u_i, v_i)_\Gamma 
\\
&=(a \nabla u,\nabla v)_\Omega 
      - ([n \cdot a \nabla u ], v )_\Gamma
\\
&= (a \nabla u,\nabla v)_\Omega 
- (f_\Gamma + \nabla_\Gamma \cdot a_\Gamma \nabla_\Gamma u, v )_\Gamma
\\ \label{eq:weak-end}
&= (a \nabla u,\nabla v)_\Omega 
+ ( a_\Gamma \nabla_\Gamma u, \nabla_\Gamma v )_\Gamma 
- (f_\Gamma,v)_\Gamma
\end{align}
where we used the fact that the boundary contributions on $\partial \Omega$ vanish due 
to the boundary condition and then we used (\ref{eq:strong-problem-b}). We thus 
arrive at the weak formulation: find $u \in V$ such that 
\begin{equation}\label{eq:weak}
A(u,v) = L(v) \qquad v \in V
\end{equation}
where 
\begin{align}
A(u,v)&=  (a \nabla u,\nabla v)_\Omega 
+ ( a_\Gamma \nabla_\Gamma u, \nabla_\Gamma v )_\Gamma
 \\
L(v) &= (f,v)_\Omega +  (f_\Gamma,v)_\Gamma
\end{align}
Introducing the energy norm 
\begin{equation}
\tn v \tn^2  = A(v,v)
\end{equation}
on $V$, it follows using the Poincar\'e inequality $\| v \|_\Omega \lesssim \| \nabla v \|_{\Omega}$, 
which holds since $v=0$ on $\partial \Omega$, and the trace inequality 
$\| v \|_\Gamma \lesssim \|v\|_{H^1(\Omega_2)}$, that 
\begin{equation}
\tn v \tn^2 \sim \| v \|^2_{H^1(\Omega)} + \| v \|^2_{H^1(\Gamma)}
\end{equation}
and hence $\tn \cdot \tn$ is a norm on $V$. The form $A$ is a scalar product 
on $V$ and by definition $A$ is coercive and continuous on $\tn \cdot \tn$. Therefore it follows from the 
Lax-Milgram Lemma that there is a unique solution $u \in V$ to (\ref{eq:weak}).

\paragraph{Regularity Properties.}
We have the elliptic regularity estimate 
\begin{equation}\label{eq:ell-reg}
\| u \|_{H^2(\Omega_1)} + \| u \|_{H^2(\Omega_2)}  + \| u \|_{H^2(\Gamma)} 
\lesssim    
\| f \|_\Omega + \| f_\Gamma \|_\Gamma
\end{equation}
To verify (\ref{eq:ell-reg}) we let $u_i \in H^1_0(\Omega_i)$ solve 
\begin{equation}
(a_i \nabla u_i, \nabla v)_{\Omega_i} = (f,v)_{\Omega_i}\qquad \forall v \in H^1_0(\Omega_i)
\end{equation}
Then we have 
\begin{equation}\label{eq:ell-reg-sub}
\| u_i \|_{H^2(\Omega_i)} \lesssim \| f \|_{\Omega_i} \qquad i = 1,2
\end{equation}
Observe that by the boundary conditions and the regularity of $u_i$ we
have that $\nablas u_i = 0$, $i=1,2$.
Next writing $u = u_\Gamma + u_1 + u_2$ we find using the equation that 
$u_\Gamma \in V$ satisfies
\begin{align}\nonumber
-\nablas \cdot a_\Gamma \nablas u _\Gamma = -\nablas \cdot a_\Gamma \nablas u 
&=  f_\Gamma + [ n\cdot a\nabla u ]
\\ \label{eq:u-Gamma}
&= f_\Gamma + n_1 \cdot (a_1\nabla  u_1 - a_2\nabla u_2) 
+ [n \cdot a \nabla u_\Gamma]  \qquad \text{on $\Gamma$}
\end{align}
and 
\begin{equation}
-\nabla \cdot a_i \nabla u_\Gamma = 0 \qquad \text{on $\Omega_i$, $i=1,2$}
\end{equation}
Using (\ref{eq:ell-reg-sub}) we conclude that 
\begin{equation}
n_1 \cdot (a_1\nabla  u_1 - a_2\nabla u_2) |_\Gamma \in H^{1/2}(\Gamma)
\end{equation}
Furthermore, using that $u_\Gamma \in H^1(\Gamma)$, since $u_\Gamma \in V$,  it follows that  
$u_\Gamma |_{\Omega_i}  \in H^{3/2}(\Omega_i)$, $i=1,2,$ and therefore 
\begin{equation}
[n \cdot a \nabla u_\Gamma] \in H^{1/2}(\Gamma)
\end{equation}
Thus using elliptic regularity we find that 
\begin{equation}
u_\Gamma|_\Gamma \in H^2(\Gamma)
\end{equation}
since the right hand side of (\ref{eq:u-Gamma}) is in $L^2(\Gamma)$. 
Collecting the bounds we obtain the regularity estimate 
\begin{equation}
\| u_\Gamma \|_{H^2(\Gamma)} 
+ 
\sum_{i=1}^2 \| u_\Gamma \|_{H^{5/2}(\Omega_i)} + \| u_i \|_{H^2(\Omega_i)} 
\lesssim 
\| f \|_\Omega + \| f_\Gamma \|_\Gamma
\end{equation} 
where we note that we have stronger control of $u_\Gamma$ on the 
subdomains.

\begin{rem} Note that if we instead take $f \in H^{-1/2}(\Gamma)$ we will have 
$u_\Gamma|_\Gamma \in H^{3/2}(\Gamma)$ and $u_\Gamma  \in H^2(\Omega_i)$ 
and the estimate 
\begin{equation}\label{eq:ell-reg-alt}
\| u \|_{H^2(\Omega_1)} + \| u \|_{H^2(\Omega_2)}  + \| u \|_{H^{3/2}(\Gamma)} 
\lesssim    
\| f \|_\Omega + \| f_\Gamma \|_{H^{-1/2}(\Gamma)}
\end{equation}
\end{rem}

\section{The Finite Element Method} 
To design a finite element method for the problem we use the classical
approach restricting the weak formulation \eqref{eq:weak} to a suitably chosen
finite dimensional subspace of $V$.
To this end let
\begin{itemize}
\item  $\mcT_h$ be a locally quasi uniform conforming mesh on $\Omega$, 
consisting of shape regular simplices with element size $h_T$ and let 
$h = \max_{T \in \mcT_h} h_T$ be the global mesh parameter.
\item  $V_h$ be a finite element space consisting of continuous piecewise 
linear polynomials on $\mcT_h$.
\item $\mcT_h(\Gamma)$ denote the set of elements intersected by the interface:
\[
\mcT_h(\Gamma):= \{T \in \mcT_h : K \cap \Gamma \ne \emptyset\}
\]
\end{itemize}
The finite element method takes the form: find $u_h \in V_h$ such that 
\begin{equation}\label{eq:fem}
A(u_h,v) = L(v) \qquad v \in V_h
\end{equation}

\section{Error Estimates}

\subsection{Preliminaries}
\begin{itemize}
\item Let $\rho$ be the signed distance function associated with 
$\Gamma$, negative in $\Omega_1$ and positive in $\Omega_2$. We 
then have $n = \nabla \rho$ where $n = n_1$ is the unit normal direction 
exterior to $\Omega_1$.
\item For $\zeta >0$ let define a tubular neighborhood around $\Gamma$ by
\[
U_\zeta(\Gamma) := \{x \in \Omega: \min_{x_\Gamma \in \Gamma}
\|x - x_\Gamma\|_{\IR^d}\leq \zeta\}.
\]
\item There is $\delta_0>0$ such that for each $x \in U_{\delta_0}(\Gamma)$ 
there is a unique point $p(x)\in \Gamma$ such that $\|x - p(x)\|_{\IR^d}$ is 
minimal called the closest point. We also have the formula
\begin{equation}\label{eq:closest-point}
p(x) = x - \rho(x) n(p(x)) 
\end{equation}
for the so called closest point mapping $p: U_{\delta_0}(\Gamma) \rightarrow \Gamma$ 
\item Let $v^e = v \circ p$ be the extension of $v$ from $\Gamma$ to 
$U_{\delta_0}(\Gamma)$. We then have 
\begin{equation}\label{eq:ext-stab}
\| v^e \|_{U_{\delta}(\Gamma) } \lesssim \delta^{1/2} \| v \|_{\Gamma}
\end{equation}
\item The tangential gradient is defined by
\begin{equation}
\nablas v = P \nabla v^e
\end{equation}
where we recall that $P(x) =I - n(x) \otimes n(x)$ is the projection onto the tangent 
plane $T_x(\Gamma)$ to $\Gamma$ at $x$.
\end{itemize}

\subsection{Interpolation}
We introduce the following concepts.
\begin{itemize}
\item 
Let $\pi_h:L^2(\Omega) \rightarrow V_h$ be the Cl\'ement interpolant which 
satisfies the interpolation error estimate
\begin{equation}\label{eq:interpol-sz}
\| u - \pi_h u \|_{H^s(T)} \lesssim h^{t - s}\| u \|_{H^{t}(\mcN_h(T))} \qquad 0\leq s \leq t \leq 2
\end{equation}
where $\mcN_h(T) \subset \mcT_h$ is the set of all elements which are node neighbors 
of $T$.

\item In order to account for the fact that the exact solution $u$ is not in regular across 
the interface we construct an interpolation operator which is modified close to the interface. 
Essentially we interpolate on an extension of $u|_\Gamma$ in the neighborhood of $\Gamma$ 
and on $u$ outside of $\Gamma$.
Let $\chi:[0,1] \rightarrow [0,1]$ be a smooth function such that $\chi = 0$ on $[2/3,1]$, 
and $\chi=1$ on $[0,1/3]$. On $U_\delta(\Gamma)$ let $\chi_\delta(x) = \chi(|\rho(x)|/\delta)$ 
and on $\Omega \setminus U_\delta(\Gamma)$  let $\chi_\delta(x)= 0$. Define the 
interpolant 
\begin{equation}\label{eq:interpol-modified}
I_h v = 
\pi_h (v(1-\chi_{\delta})  + v^e\chi_{\delta})
=
\pi_h (v + (v^e - v)\chi_{\delta})  
\end{equation}
Note that with this construction we essentially interpolate $u^e$ close to $\Gamma$ 
and $u$ outside of $\Gamma$.

\item 
We consider meshes that are refined in the vicinity of the interface. More precisely, we 
assume  that there are two mesh parameters $h_\Gamma$ and $h$ such that 
\begin{equation}
\begin{cases}
h_T \lesssim h_\Gamma & T \in \mcN_h(\mcT_h(\Gamma))
\\
h_T \lesssim h & T \in \mcT_h \setminus  \mcN_h(\mcT_h(\Gamma))
\end{cases}
\end{equation}

\item We chose $\delta$ in the definition (\ref{eq:interpol-modified}) of $I_h$ in such 
a way that 
\begin{equation}\label{eq:chideltaequalsone}
\mcN_h(\mcT_h(\Gamma))  \subset U_{\delta/3}(\Gamma)
\end{equation}
which means that $\chi_\delta =1$ on $\mcN_h(\mcT_h(\Gamma))$.  
We note that (\ref{eq:chideltaequalsone}) then implies that we may take 
$\delta \sim h_\Gamma$ in the definition of $\chi_\delta$.
  
\end{itemize}  

\begin{rem} We note that the total number of degrees of freedom $N$ is 
related to the global mesh parameter as follows
\begin{equation}
N \sim h^{-d} + h_\Gamma^{-(d-1)} \sim h^{-d} + h^{-2(d-1)}
\end{equation}
Thus we find that for $d=2$ we have $N\sim h^{-2}$, which is equivalent to the unrefined mesh, 
and for $d=3$ we have $N\sim h^{-4}$, which is slightly more expensive compared to the unrefined 
mesh which scales as $h^{-3}$.  
\end{rem} 
  
\begin{lem} There is a constant such that  
\begin{align}\label{eq:interpol-energy-cont}
\tn v - I_h v \tn
\lesssim  
(h + h_\Gamma^{1/2} )(\| v \|_{H^2(\Omega_1)} 
+ \| v \|_{H^2(\Omega_2)} ) + h_\Gamma \| v \|_{H^2(\Gamma)}
\end{align} 
\end{lem}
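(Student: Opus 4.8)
The plan is to estimate the two pieces of the energy norm $\tn v - I_h v\tn^2 = \|a^{1/2}\nabla(v-I_h v)\|_\Omega^2 + \|a_\Gamma^{1/2}\nabla_\Gamma(v-I_h v)\|_\Gamma^2$ separately, splitting the bulk integral over the refined tube and its complement. First I would recall from \eqref{eq:interpol-modified} that $I_h v = \pi_h w$ with $w := v + (v^e - v)\chi_\delta$, so that $v - I_h v = (v - w) + (w - \pi_h w)$ and hence we only need (i) a bound on $v - w = (v - v^e)\chi_\delta$ and (ii) the standard Cl\'ement estimate \eqref{eq:interpol-sz} applied to $w$ on each element, using the local mesh sizes $h_\Gamma$ on $\mcN_h(\mcT_h(\Gamma))$ and $h$ elsewhere. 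On $\Omega\setminus U_\delta(\Gamma)$ we have $\chi_\delta = 0$, so there $w = v$ is $H^2$ (being $H^2$ on each of $\Omega_1,\Omega_2$ and smooth away from $\Gamma$), and \eqref{eq:interpol-sz} with $t=2$ gives the $h(\|v\|_{H^2(\Omega_1)}+\|v\|_{H^2(\Omega_2)})$ contribution directly; the Cl\'ement patches of elements touching this region do not cross $\Gamma$ once $\delta\sim h_\Gamma$ is chosen as in \eqref{eq:chideltaequalsone}, so no regularity is lost.

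Next I would treat the tube $U_\delta(\Gamma)$, which for $\delta\sim h_\Gamma$ is contained in (a fixed dilation of) $\mcN_h(\mcT_h(\Gamma))$ where $h_T\lesssim h_\Gamma$. Here the key observation is that the extension $v^e = v\circ p$ is smooth in the normal direction: $\nabla v^e = \nablas v^e$ composed with the closest-point map, so $v^e$ restricted to $\Omega_i$ has $H^2$-norm on $U_\delta(\Gamma)$ controlled, via the extension stability \eqref{eq:ext-stab} and its gradient analogue, by $\delta^{1/2}\|v\|_{H^2(\Gamma)}$ up to curvature terms. Thus on the tube $w$ is the sum of $v$ (with $\|v\|_{H^2(U_\delta\cap\Omega_i)}$ bounded) and a $\chi_\delta$-cutoff of $v^e - v$; applying \eqref{eq:interpol-sz} with $t=2$, $s=1$ and local size $h_\Gamma$, the gradient error on the tube is $\lesssim h_\Gamma(\|v^e\|_{H^2(U_\delta\cap\Omega_1)} + \|v^e\|_{H^2(U_\delta\cap\Omega_2)} + \|v\|_{H^2(U_\delta\cap\Omega_1)}+\|v\|_{H^2(U_\delta\cap\Omega_2)})$; using $\delta\sim h_\Gamma$ the $v^e$-terms give $h_\Gamma\cdot h_\Gamma^{1/2}\|v\|_{H^2(\Gamma)}$, which is absorbed into $h_\Gamma\|v\|_{H^2(\Gamma)}$, while the $v$-terms give a $h_\Gamma(\|v\|_{H^2(\Omega_1)}+\|v\|_{H^2(\Omega_2)})$ contribution, dominated by $h$. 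The derivatives of $\chi_\delta$ scale like $\delta^{-1}\sim h_\Gamma^{-1}$ but act on $v^e - v$, which is $O(\delta)$ in $L^\infty$ near $\Gamma$ since $v^e = v$ on $\Gamma$ and $[v]=0$; a Taylor/Hardy-type argument makes these terms of the same order, so no blow-up occurs.

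For the interface term $\|\nabla_\Gamma(v - I_h v)\|_\Gamma$ I would use a trace inequality to pass from $\Gamma$ to the surrounding band of elements in $\mcT_h(\Gamma)$: for a piecewise polynomial difference one has $\|\cdot\|_\Gamma^2 \lesssim h_\Gamma^{-1}\|\cdot\|_{\mcN_h(\mcT_h(\Gamma))}^2 + h_\Gamma\|\nabla\cdot\|_{\mcN_h(\mcT_h(\Gamma))}^2$ on shape-regular elements cut by $\Gamma$. Since on this band $\chi_\delta = 1$, so $w = v^e$ there, and since $\nablas(v^e) = \nablas v$ on $\Gamma$, the quantity $\nabla_\Gamma(v - I_h v)$ on $\Gamma$ equals (the tangential part of) $\nabla(v^e - \pi_h v^e)$, and applying the trace estimate together with \eqref{eq:interpol-sz} for $v^e$ with local size $h_\Gamma$, plus the extension bounds, yields the $h_\Gamma\|v\|_{H^2(\Gamma)}$ term. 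Collecting the three contributions gives \eqref{eq:interpol-energy-cont}.

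\textbf{Main obstacle.} The delicate point is handling the cutoff $\chi_\delta$ near $\Gamma$: its gradient is of size $\delta^{-1}$, so in the product rule $\nabla\big((v^e - v)\chi_\delta\big) = \chi_\delta\nabla(v^e - v) + (v^e - v)\nabla\chi_\delta$ the second term is only controlled because $v^e - v$ vanishes on $\Gamma$ and is accordingly $O(|\rho|)$, which cancels the $\delta^{-1}$. Making this rigorous requires a Hardy-type inequality $\|(v^e-v)/\rho\|_{U_\delta\cap\Omega_i} \lesssim \|\nabla(v^e - v)\|_{U_\delta\cap\Omega_i}$ valid since $(v^e-v)|_\Gamma = 0$, together with careful bookkeeping of the $H^2(\Omega_i)$- versus $H^{5/2}(\Omega_i)$- and $H^2(\Gamma)$-norms so that every term is either of order $h$ times the bulk $H^2$ norms or of order $h_\Gamma$ times $\|v\|_{H^2(\Gamma)}$, using $\delta\sim h_\Gamma$ to convert powers of $\delta$ coming from \eqref{eq:ext-stab} into the stated powers of $h_\Gamma$.
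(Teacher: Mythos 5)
Your decomposition $v - I_h v = (v - w) + (w - \pi_h w)$ with $w = v + (v^e - v)\chi_\delta$ is exactly the paper's (their Terms $I$, $II$, $III$ are your $(v-w)$ piece and the bulk and surface parts of $(w-\pi_h w)$), and the key ingredients all match: the Hardy-type bound $\delta^{-1}\|v^e - v\|_{U_\delta(\Gamma)}\lesssim \|n^e\cdot\nabla v\|_{U_\delta(\Gamma)}$, the co-area/trace argument giving $\|\nabla v\|_{U_\delta(\Gamma)}\lesssim \delta^{1/2}(\|v\|_{H^2(\Omega_1)}+\|v\|_{H^2(\Omega_2)})$, the extension stability $\|w^e\|_{U_\delta(\Gamma)}\lesssim\delta^{1/2}\|w\|_\Gamma$, and the trace inequality from $\Gamma$ to the cut elements for the surface-gradient term.

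The one place you deviate, and where your write-up has a gap, is the treatment of $(I-\pi_h)w$ on the refined band: you invoke the Cl\'ement estimate \eqref{eq:interpol-sz} with $t=2$ and local size $h_\Gamma$ and claim the gradient error there is $h_\Gamma$ times $H^2$-norms of $v^e$ and of $v$. On the patches where $\chi_\delta$ transitions from $1$ to $0$ this is not a valid bound on $\|w\|_{H^2}$: $\nabla^2 w$ contains $(v^e-v)\nabla^2\chi_\delta \sim \delta^{-2}(v^e-v)$ and $\nabla\chi_\delta\otimes\nabla(v^e-v)\sim\delta^{-1}\nabla(v^e-v)$, and your ``main obstacle'' discussion only covers the first-derivative analogue. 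After the Hardy inequality and the $\delta^{1/2}$ trace factor these contributions are of size $\delta^{-1/2}(\|v\|_{H^2(\Omega_1)}+\|v\|_{H^2(\Omega_2)})$, so multiplying by $h_\Gamma\sim\delta$ yields an $h_\Gamma^{1/2}$-order term, not $h_\Gamma$ --- still within the stated estimate, so the lemma survives, but your intermediate claim is off and requires this extra computation to repair. The paper sidesteps the issue entirely: on $\mcN_h(\mcT_h(\Gamma))$ it uses only the $H^1$-stability of $\pi_h$ (i.e.\ $s=t=1$ in \eqref{eq:interpol-sz}) together with the thinness of the band, which already gives $h_\Gamma^{1/2}(\|v\|_{H^2(\Omega_1)}+\|v\|_{H^2(\Omega_2)})$; since your $(v-w)$ term is of that same order anyway, nothing is gained by the sharper second-order estimate, and the second derivatives of $\chi_\delta$ never have to be touched.
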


\begin{proof}
Using the definition of $I_h$ we have 
\begin{align}
v - I_h v 
&=
(v^e - v ) \chi_{\delta} 
+ (I-\pi_h) ( v + (v^e - v)\chi_{\delta}) 
\end{align} 
and thus  
  \begin{align}
 \| \nabla (v  - I_h v) \|_\Omega + \| \nablas (v - I_h v) \|_{\Gamma} 
 &= 
 \| \nabla ( (v^e - v) \chi_{\delta} ) \|_\Omega 
\\ \nonumber
&\qquad + 
 \| \nabla (I-\pi_h) ( v + (v^e - v)\chi_{\delta} ) \|_\Omega
\\ \nonumber
&\qquad +
 \| \nablas (I-\pi_h) ( v + (v^e - v)\chi_{\delta} ) \|_\Gamma
\\
&=I + II + III
 \end{align}

\paragraph{Term $\bfI$.} Using the product rule and the triangle inequality 
\begin{align}
 \| \nabla ((v^e - v) \chi_{\delta}) \|_\Omega  
 &\lesssim 
  \| (\nabla (v^e - v)) \chi_{\delta} \|_{U_{\delta}(\Gamma)}  
  +    \| (v^e - v) (\nabla \chi_{\delta} )\|_{U_{\delta}(\Gamma)}  
\\  
 &\lesssim 
  \| \nabla (v^e - v) \|_{U_{\delta}(\Gamma)}  
  +    \delta^{-1} \| v^e - v \|_{U_{\delta}(\Gamma)}  
 \\  
 &\lesssim 
  \| (\nablas v)^e \|_{U_{\delta}(\Gamma)} 
+  
  \| \nabla v \|_{U_{\delta}(\Gamma)}  
  +  \| n^e \cdot \nabla v  \|_{U_{\delta}(\Gamma)}  
  \\  
 &\lesssim 
\delta^{1/2}  \| \nablas v \|_{\Gamma} 
+  
  \| \nabla v \|_{U_{\delta}(\Gamma)}   
 \\
 &\lesssim 
 h_\Gamma^{1/2}  ( \| v \|_{H^2(\Omega_1)} +  \| v \|_{H^2(\Omega_2)}
 ). \label{eq:halfpower}
\end{align}
Here we used that by the properties of the extension there holds
\begin{equation}\label{eq:extension-prop-a}
\delta^{-1} \| v^e - v \|_{U_{\delta}(\Gamma)}  \lesssim \| n^e \cdot
\nabla v  \|_{U_{\delta}(\Gamma)}  
\end{equation}
see the Appendix of \cite{BurHanLar15} for a verification, and 
\begin{equation}
\| w^e \|_{U_{\delta}(\Gamma)} \lesssim \delta^{\frac12} \| w \|_\Gamma
\end{equation}
which we applied with $w=\nablas v$. Furthermore, we used the bound
\begin{equation}
 \| \nabla v \|_{U_{\delta}(\Gamma)} 
  \lesssim \delta^{\frac12}
 \sup_{t \in [-\delta,\delta]} \| \nabla v \|_{\Gamma_t} 
\end{equation}
where $\Gamma_t = \rho^{-1}(t) = \{ x \in \IR^d : \rho(x) = t\}$, for $|t|<\delta_0$, 
followed by the trace inequality 
\begin{equation}
\| \nabla v \|_{\Gamma_t} 
\leq C_t \| v \|_{H^2(\Omega_i \setminus U_{|t|}(\Gamma))} 
\leq \underbrace{\sup_{t \in [-\delta,\delta]} C_t}_{\leq C} \| v \|_{H^2(\Omega_i)}  
\end{equation}
where $i=1$ for $t \in [-\delta,0)$, $i=2$ for $t \in [0,\delta]$, and 
finally $\delta \sim h_\Gamma$.

\paragraph{Term $\bfI\bfI$.} Using the interpolation error 
estimate (\ref{eq:interpol-sz}) we obtain
\begin{align}
&\|\nabla (I-\pi_h) ( v + (v^e - v)\chi_{\delta} ) \|_\Omega
\\
&\qquad \lesssim 
\|\nabla (I-\pi_h) v \|_\Omega
+
\|\nabla (I-\pi_h) ( (v^e - v)\chi_{\delta} ) \|_\Omega
\\
&\qquad \lesssim 
\|\nabla (I-\pi_h) v \|_\Omega
+
\underbrace{\|\nabla ( (v^e - v)\chi_{\delta})  \|_\Omega}_{I}
\\
&\qquad \lesssim 
(h_\Gamma^{1/2}   + h)  (\| v \|_{H^2(\Omega_1)} + \| v \|_{H^2(\Omega_2)} )
+
 h_\Gamma^{1/2}  ( \| v \|_{H^2(\Omega_1)} +  \| v \|_{H^2(\Omega_2)} )
\end{align}
Here we used the estimate 
\begin{align}
\| \nabla (I-\pi_h) v \|_\Omega 
&\lesssim 
 \| \nabla (I-\pi_h) v \|_{\mcN_h(\mcT_h(\Gamma))} 
 +  
 \| \nabla (I-\pi_h) v \|_{\mcT_h \setminus \mcN_h(\mcT_h(\Gamma))} 
\\
&\lesssim  \| \nabla v \|_{\mcN_h(\mcT_h(\Gamma))} 
 +  
 h \| \nabla^2 v \|_{\mcT_h \setminus \mcN_h(\mcT_h(\Gamma))} 
\\
&\lesssim \delta^{1/2} \sup_{t \in [-\delta,\delta]} \| \nabla v \|_{\Gamma_t} 
 +  
 h \| \nabla^2 v \|_{\mcT_h \setminus \mcN_h(\mcT_h(\Gamma))} 
\\
&\lesssim h_\Gamma^{1/2} 
( \|  v \|_{H^2(\Omega_1)} + \|  v \|_{H^2(\Omega_2)} ) 
\end{align}
that is obtained using similar arguments as in \eqref{eq:halfpower}.
\paragraph{Term $\bfI\bfI\bfI$.} Using the trace inequality 
\begin{equation}
\| w \|^2_{\Gamma \cap T} \lesssim h^{-1} \| v \|^2_T + h \|\nabla v \|^2_T 
\end{equation}
see \cite{HanHanLar},  the interpolation estimate (\ref{eq:interpol-sz}), 
the fact (\ref{eq:chideltaequalsone}), and finally the stability of the 
extension we find that 
\begin{align}\nonumber
&\|\nablas (I-\pi_h) ( v + (v^e - v)\chi_{\delta} ) \|^2_\Gamma 
\\
&\qquad =
 \|\nablas ( (I-\pi_h) v^e) \|^2_\Gamma
\\ 
&\qquad \lesssim 
h^{-1}_\Gamma\| \nabla ( (I-\pi_h)  ( v + (v^e - v)\chi_{\delta} )  ) \|^2_{\mcT_h(\Gamma)} 
\\ \nonumber
&\qquad \qquad + 
h_\Gamma \| \nabla^2 (  (I-\pi_h)  ( v + (v^e - v)\chi_{\delta} ) ) \|^2_{\mcT_h(\Gamma)}
\\
&\qquad \lesssim 
h_\Gamma \| \nabla^2  ( v + (v^e - v)\chi_{\delta} ) \|^2_{\mcN_h(\mcT_h(\Gamma))}
\\
&\qquad \lesssim 
h_\Gamma \| v^e  \|^2_{H^2(\mcN_h(\mcT_h(\Gamma)))}
\\
&\qquad \lesssim h^2_\Gamma \| v \|^2_{H^2(\Gamma)}
\end{align}
\end{proof}

\begin{rem} Alternatively we may use a different extension operator and prove an interpolation 
estimate which requires less regularity as follows. We include some details for convenience
\begin{itemize}
\item There is a continuous extension operator 
\begin{equation}
H^{s}(\Gamma) \ni v \mapsto v^E \in H^{s+1/2}(\Omega)
\end{equation}
We construct $v^E$ by first solving the Dirichlet problem $\Delta v^E = 0$ in $\Omega_2$ and 
$v^E = v$ on $\Gamma$, for which we have the regularity estimate 
$\| v^E \|_{H^{s+1/2}(\Omega_2)} \lesssim \| v\|_{H^s(\Gamma)}$. Next we extend $v^E$ 
to $\IR^d$ using a standard continuous extension operator $\mathcal{E}_{\Omega_2}:H^s(\Omega_2) \rightarrow H^s(\IR^d)$, $s>0$, that is $v^E |_{\IR^d \setminus \Omega_2}  = \mathcal{E}_{\Omega_2} 
(v^E |_{\Omega_2}$). 

\item With $v^E$ instead of $v^e$ in the definition of $I_h$ we derive the interpolation 
estimate  
\begin{equation}
\tn v - I_h v \tn
\lesssim  
(h + h_\Gamma^{1/2} )(\| v \|_{H^2(\Omega)} 
+ \| v \|_{H^2(\Gamma)} )
\end{equation}
as follows. Term $I$ and $II$ can be estimated in the same way as above. For Term $III$ 
we have the estimates
\begin{align}\nonumber
&\|\nablas (I-\pi_h) ( v + (v^E - v)\chi_{\delta} ) \|^2_\Gamma 
\\
&\qquad =
 \|\nablas ( (I-\pi_h) v^E ) \|^2_\Gamma
\\ 
&\qquad \lesssim 
h^{-1}_\Gamma\| \nabla ( (I-\pi_h)  ( v + (v^E - v)\chi_{\delta} )  ) \|^2_{\mcT_h(\Gamma)} 
\\ \nonumber
&\qquad \qquad + 
h_\Gamma \| \nabla^2 (  (I-\pi_h)  ( v + (v^E - v)\chi_{\delta} ) ) \|^2_{\mcT_h(\Gamma)}
\\
&\qquad \lesssim 
h_\Gamma \| \nabla^2  ( v + (v^E - v)\chi_{\delta} ) \|^2_{\mcN_h(\mcT_h(\Gamma))}
\\
&\qquad \lesssim 
h_\Gamma \| v^E  \|^2_{H^2(\mcN_h(\mcT_h(\Gamma)))}
\\
&\qquad \lesssim h_\Gamma \| v \|^2_{H^{3/2}(\Gamma)} 
\\
&\qquad \lesssim h_\Gamma \| v \|^2_{H^2(\Omega_2)}
\end{align}
where at last we used a trace inequality to pass from $\Gamma$ to  $\Omega_2$. 
\end{itemize}
\end{rem}

\subsection{Error Estimates}

\begin{thm} The following error estimates hold
\begin{align}\label{eq:errorest-energy}
\tn u - u_h \tn \lesssim (h_\Gamma^{1/2} + h) 
\Big(\| u \|_{H^2(\Omega_1)}  + \| u \|_{H^2(\Omega_2)} \Big) 
+ 
h_\Gamma \| u \|_{H^2(\Gamma)}  
\end{align}
\begin{equation}\label{eq:errorest-L2}
\| u - u_h \|_\Omega + \| u - u_h \|_\Gamma 
\lesssim 
 (h_\Gamma + h^2) 
\Big(\| u \|_{H^2(\Omega_1)}  + \| u \|_{H^2(\Omega_2)} \Big) 
+ 
h_\Gamma^2 \| u \|_{H^2(\Gamma)}  
\end{equation}
\end{thm}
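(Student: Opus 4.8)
The plan is to prove the energy estimate \eqref{eq:errorest-energy} first via C\'ea's lemma, and then obtain the $L^2$ estimate \eqref{eq:errorest-L2} by a duality (Aubin--Nitsche) argument.

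\textbf{Step 1: Energy estimate by C\'ea's lemma.}
Since the finite element method \eqref{eq:fem} is a conforming Galerkin discretization of the symmetric, coercive, continuous bilinear form $A$ on $V$, Galerkin orthogonality gives $A(u-u_h, v)=0$ for all $v \in V_h$, and hence $\tn u - u_h \tn = \inf_{v \in V_h} \tn u - v \tn \le \tn u - I_h u \tn$. Applying the interpolation estimate \eqref{eq:interpol-energy-cont} from the Lemma with $v=u$ immediately yields
\begin{equation*}
\tn u - u_h \tn \lesssim (h + h_\Gamma^{1/2})\big(\| u \|_{H^2(\Omega_1)} + \| u \|_{H^2(\Omega_2)}\big) + h_\Gamma \| u \|_{H^2(\Gamma)},
\end{equation*}
which is \eqref{eq:errorest-energy}. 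This step is essentially immediate.

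\textbf{Step 2: Duality argument for the $L^2$ estimate.}
Let $\state = u - u_h$ and consider the dual problem: find $\phi \in V$ such that $A(v,\phi) = (\state, v)_\Omega + (\state, v)_\Gamma$ for all $v \in V$. By the Lax--Milgram lemma this has a unique solution, and since the right-hand side data are $\state|_\Omega \in L^2(\Omega)$ and $\state|_\Gamma \in L^2(\Gamma)$, the elliptic regularity estimate \eqref{eq:ell-reg} applies with $f \leftarrow \state$, $f_\Gamma \leftarrow \state|_\Gamma$, giving
\begin{equation*}
\| \phi \|_{H^2(\Omega_1)} + \| \phi \|_{H^2(\Omega_2)} + \| \phi \|_{H^2(\Gamma)} \lesssim \| \state \|_\Omega + \| \state \|_\Gamma.
\end{equation*}
Testing the dual problem with $v = \state$ and using symmetry of $A$ together with Galerkin orthogonality $A(\state, I_h \phi)=0$, we get
\begin{equation*}
\| \state \|_\Omega^2 + \| \state \|_\Gamma^2 = A(\state, \phi) = A(\state, \phi - I_h \phi) \le \tn \state \tn \, \tn \phi - I_h \phi \tn.
\end{equation*}
Now bound $\tn \phi - I_h\phi \tn$ by the interpolation Lemma and $\tn \state \tn$ by Step 1; the regularity bound on $\phi$ turns $\| \state \|_\Omega + \| \state \|_\Gamma$ on the right into the same quantity on the left, which can be divided out. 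Combining the powers of $h$ and $h_\Gamma$ from the two interpolation factors — $(h+h_\Gamma^{1/2})$ times $(h+h_\Gamma^{1/2})$ for the bulk terms, and the mixed products for the $\Gamma$ term — and observing $h_\Gamma^{1/2}\cdot h_\Gamma^{1/2} = h_\Gamma \gtrsim h_\Gamma$, $h\cdot h = h^2$, while the $\| u\|_{H^2(\Gamma)}$ contributions collect into $h_\Gamma (h+h_\Gamma^{1/2})$ which is dominated by $h_\Gamma$ (since $h, h_\Gamma \lesssim 1$), one reads off the claimed rate $(h_\Gamma + h^2)$ for the bulk $H^2$ norms and $h_\Gamma^2$-type control for the surface norm, absorbing cross terms with Young's inequality where needed.

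\textbf{Main obstacle.}
The delicate point is bookkeeping the anisotropic interpolation estimate \eqref{eq:interpol-energy-cont}, which is \emph{not} a single power of a mesh parameter: it mixes $h$, $h_\Gamma^{1/2}$ and $h_\Gamma$ against three different regularity norms. When it is squared (once for $\state$, once for $\phi$) one must carefully check that every cross term is either of the advertised order or can be absorbed; in particular one should verify that terms like $h \cdot h_\Gamma^{1/2}$ and $h_\Gamma^{1/2}\cdot h_\Gamma$ are bounded by $h_\Gamma + h^2$ (true since $h h_\Gamma^{1/2} \le \tfrac12(h^2 + h_\Gamma)$ by Young, and $h_\Gamma^{3/2} \le h_\Gamma$), so that the final estimate \eqref{eq:errorest-L2} indeed has only the clean terms $(h_\Gamma + h^2)(\| u \|_{H^2(\Omega_1)} + \| u \|_{H^2(\Omega_2)})$ and $h_\Gamma^2 \| u \|_{H^2(\Gamma)}$. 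A minor additional point is confirming that the dual solution has exactly the regularity \eqref{eq:ell-reg} required to invoke the interpolation Lemma — which it does, since the dual problem has the same structure as the primal one with $L^2$ data on both $\Omega$ and $\Gamma$.
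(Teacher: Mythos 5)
You follow the paper's argument exactly: C\'ea's lemma together with the interpolation Lemma for the energy estimate, and for the $L^2$ bound the same Aubin--Nitsche duality with the dual problem $A(v,\phi)=(u-u_h,v)_\Omega+(u-u_h,v)_\Gamma$, Galerkin orthogonality, the interpolation Lemma applied to $\phi$, and the elliptic regularity \eqref{eq:ell-reg}. The cross-term bookkeeping you single out as the main obstacle is glossed over in the paper too --- multiplying the two energy-type bounds produces a term of order $h_\Gamma^{3/2}\|u\|_{H^2(\Gamma)}$ that is not literally $\lesssim h_\Gamma^{2}\|u\|_{H^2(\Gamma)}$ (though it is harmless for the intended scaling $h_\Gamma\sim h^2$) --- so your write-up is at least as careful as the original.
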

\begin{proof} {\bf (\ref{eq:errorest-energy}).} The proof follows immediately from 
Galerkin orthogonality and the interpolation error estimate 
\begin{align}
\tn u - u_h \tn^2 &=A(u - u_h, u-u_h) 
\\
&= A(u - u_h, u - \pi_h u ) 
\\
&\leq \tn u - u_h \tn \, \tn u - \pi_h u \tn
\end{align}
and thus 
\begin{align}
\tn u - u_h \tn 
&\leq \tn u - \pi_h u \tn
\\
&\lesssim 
 (h_\Gamma^{1/2} + h) 
\Big(\| u \|_{H^2(\Omega_1)}  + \| u \|_{H^2(\Omega_2)} \Big) 
+ 
h_\Gamma \| u \|_{H^2(\Gamma)}  
\end{align}

\paragraph{(\ref{eq:errorest-L2}).}
For the $L^2$  estimate we obtain an error representation formula 
using the dual problem: find $\phi \in V$ such that 
\begin{equation}
A(v,\phi) = (u-u_h,v)_{\Omega} +  (u-u_h,v)_{\Gamma}\qquad \forall v \in V
\end{equation}
with $v = u - h_h$,  
\begin{align}
\nonumber
&\| u - u_h \|^2_\Omega + \| u - u_h \|^2_\Gamma 
\\
&\qquad =
A(u - u_h, \phi ) 
\\
&\qquad = 
A(u-u_h,\phi - I_h \phi )
\\
&\qquad \leq 
\tn u - u_h \tn \, \tn \phi - I_h \phi \tn
\\
&\qquad 
\lesssim
\Big( (h_\Gamma^{1/2} + h) 
\Big(\| u \|_{H^2(\Omega_1)}  + \| u \|_{H^2(\Omega_2)} \Big) 
+ 
h_\Gamma \| u \|_{H^2(\Gamma)}  \Big)
\\ \nonumber
&\qquad \qquad \times
\Big( (h_\Gamma^{1/2} + h) 
\Big(\| \phi \|_{H^2(\Omega_1)}  + \| \phi \|_{H^2(\Omega_2)} \Big) 
+ 
h_\Gamma \| \phi \|_{H^2(\Gamma)}  \Big)
\\
&\qquad
\lesssim 
\Big( (h_\Gamma+ h^2) 
\Big(\| u \|_{H^2(\Omega_1)}  + \| u \|_{H^2(\Omega_2)} \Big) 
+ 
h_\Gamma^2 \| u \|_{H^2(\Gamma)}  \Big)
\\ \nonumber 
&\qquad \qquad 
\times \Big( \| u - u_h \|^2_\Omega + \| u - u_h \|^2_\Gamma \Big)^{1/2}
\end{align}
where at last we used the elliptic regularity estimate (\ref{eq:ell-reg}).
\end{proof}

\section{Numerical Examples}
\subsection{A Convergence Study for a Simple Interface Problem}

We consider a problem with $f=0$, $f_\Gamma= 1$, $a_1=a_2=a_\Gamma=1$ on the domain $\Omega=(1,e^{5/4})\times(1,e^{5/4})$
with a crack at $\sqrt{(x^2+y^2)}=: r = e$. The exact solution to this problem is given as
\[\begin{array}{>{\displaystyle}l}
 u_1 = \frac{\log{(r)}}{5}\left(4+e\right)\quad \text{for}\quad 1< r < e, \\[4mm]
 u_2 = \frac{4-4 e}{5}\left(\log{(r)}-\frac{5}{4}\right)+1\quad \text{for}\quad e <r < e^{5/4},
 \end{array}\]
and this solution is applied as Dirichlet boundary conditions on $\partial\Omega$, corresponding to a
solution depending only on $r$ with $u=u_1=0$ at $r=1$ and $u=u_2=1$ at $r=e^{5/4}$. We compare the convergence on a globally refined mesh with a mesh 
which is locally refined so that $h_\Gamma \leq h^2$ at $\Gamma$. The convergence is then checked in $L^2$ norm and $H^1$ (semi-) norm. In Figure \ref{elev}
we show the discrete solution on a given locally refined mesh. We note that optimal convergence is obtained at the cost of locally refining the mesh, Figure \ref{conv2}, whereas
a globally refined mesh gives suboptimal convergence in accordance with (\ref{eq:errorest-energy}) and (\ref{eq:errorest-L2}), Figure \ref{conv1}.

\subsection{A More Complex Example with a Bifurcating Crack}

In this example we illustrate the modeling capabilities of our approach with application 
to a more complex problem involving a bifurcating crack. 

\paragraph{Model Problem.}
Let us for simplicity consider 
a two dimensional problem with a one dimensional crack $\Gamma$ which can be described as 
a graph with nodes $\mathcal{N} = \{ x_i \}_{i\in I_N}$ and edges 
$\mathcal{G} =\{\Gamma_j\}_{j\in I_G}$, where $I_N$, $I_G$ are finite index sets, and each 
$\Gamma_j$ is a curve between two nodes with indexes $I_N(j)$. For each $i\in I_N$ we 
let $I_G(i)$ be the set of indexes corresponding to curves for which $x_i$ is an end point. See 
Figures \ref{fig:bifurcating-layers-notation} and \ref{fig:nodes-curves-tangents}.

The governing equations are given by (\ref{eq:strong-problem-a})--(\ref{eq:strong-problem-d}) 
together with two conditions at each of the nodes $x_i \in \mathcal{N}$, the continuity condition 
\begin{equation}
u_{\Gamma_k} (x_i) = u_{\Gamma_l}(x_i) \qquad \forall k,l \in I_G(i) 
\end{equation}
and the Kirchhoff condition 
\begin{equation}\label{eq:kirchhoff}
\sum_{j \in I_{\mathcal{G}}(i)} (t_{\Gamma_j} \cdot a_{\Gamma_j} \nabla_{\Gamma_j} u_{\Gamma_j}) |_{x_j} =0
\end{equation}
where $t_{\Gamma_j} (x_i)$ is the exterior tangent unit vector to $\Gamma_j$ at $x_i$. 

\paragraph{Finite Element Method.} Let 
$V_\Gamma = \{v \in C(\Gamma) : v \in H^1(\Gamma_j), j \in I_G\}$ 
and  $V = H^1_0(\Omega) \cap V_\Gamma$. We proceed as in the derivation of the weak form 
in the standard case  (\ref{eq:weak-start})--(\ref{eq:weak-end}). However, when we use Green's 
formula on $\Gamma$ we proceed segment by segment as follows
\begin{align}\nonumber
&\sum_{j \in I_G} -(\nabla_{\Gamma_j} \cdot a_{\Gamma_j} \nabla_{\Gamma_j} u,v)_{\Gamma_j} 
\\
&\qquad=
\sum_{j \in I_G} ( a_{\Gamma_j} \nabla_{\Gamma_j} u,\nabla_{\Gamma_j} v)_{\Gamma_j} 
- \sum_{j \in I_G} \sum_{i \in I_N(j)} ( t_i \cdot a_{\Gamma_j}\nabla_{\Gamma_j} u,v)_{x_i}
\\
&\qquad=
\sum_{j \in I_G} ( a_{\Gamma_j} \nabla_{\Gamma_j} u,\nabla_{\Gamma_j} v)_{\Gamma_j} 
\end{align} 
where we changed the order of summation and used the Kirchhoff condition (\ref{eq:kirchhoff})  
together with the fact $v$ is continuous to conclude that 
\begin{align}
\sum_{j \in I_G} \sum_{i \in I_N(j)} ( t_i \cdot a_{\Gamma_j} \nabla_{\Gamma_j} u,v)_{x_i}
&=
\sum_{i \in I_N(j)}  \sum_{j \in I_G}( t_i \cdot a_{\Gamma_j}\nabla_{\Gamma_j} u,v)_{x_i} = 0
\\
&=
\sum_{i \in I_N(j)}   \underbrace{\Big( \sum_{j \in I_G} (t_i \cdot a_{\Gamma_j}\nabla_{\Gamma_j} u)|_{x_i} \Big)}_{=0} v(x_i)  
\end{align}
Thus we conclude that: 
\begin{itemize}
\item The weak formulation is precisely the same in the bifurcating crack case 
as in the standard case (\ref{eq:weak}). 
\item Since $V_h \subset V$ the method also takes the same form as 
in the standard case (\ref{eq:fem}) in this more complex situation.
\end{itemize}
The similar derivation can be performed for a two dimensional bifurcating 
crack embedded into $\IR^3$, see \cite{HanJonLarLar17} for further details.

\paragraph{Numerical Example.} The crack pattern is modeled using a polygonal chain 
interpolating higher order curves with each part of the chain of length $h/10$. The intersection 
points with element sides are computed and a new polygonal chain containing the old one 
cut by the intersection points is constructed. In Figure \ref{meshes} we show the effect on a 
coarse mesh and on a locally refined mesh. We now compute two different solutions using global refinement and local refinement. We use local refinement at $\Gamma$ until the smallest meshsize 
equals that of the globally refined model. In Figure \ref{elevations} we give the  computed solutions using these two approaches. Here $a_1=a_2=1$ and $a_\Gamma = 100$, $f=f_\Gamma=0$, and we impose, on the domain 
$\Omega =(0,13)\times(0,9.5)$,
$u=1$ at $x=0$ and $u=0$ at $x=13$ and homogeneous Neumann boundary conditions at $y=0$ and $y=9.5$. The corresponding solution with $\alpha_\Gamma=0$ is thus a plane.

\section{Concluding Remarks} 

We suggest a continuous finite element method with superimposed lower-dimensional
features modeling interfaces. The effect of these are computed using the higher dimensional 
basis functions and added to the stiffness matrix so as to yield further ``stiffness'' to the problem. 
Due to the fact that we cannot resolve kinks in the normal derivative across the interface
we do not obtain optimal convergence orders. We propose a simple adaptive scheme based 
on an a priori error estimate which guides the choice of optimal local
mesh size, to improve the local accuracy, regaining the optimal order of convergence. The resulting scheme is very simple 
and computationally expedient for many applications such as when optimization of the position 
of interfaces is of interest.

\newpage


\begin{figure}
\begin{center}
\includegraphics[scale=0.4]{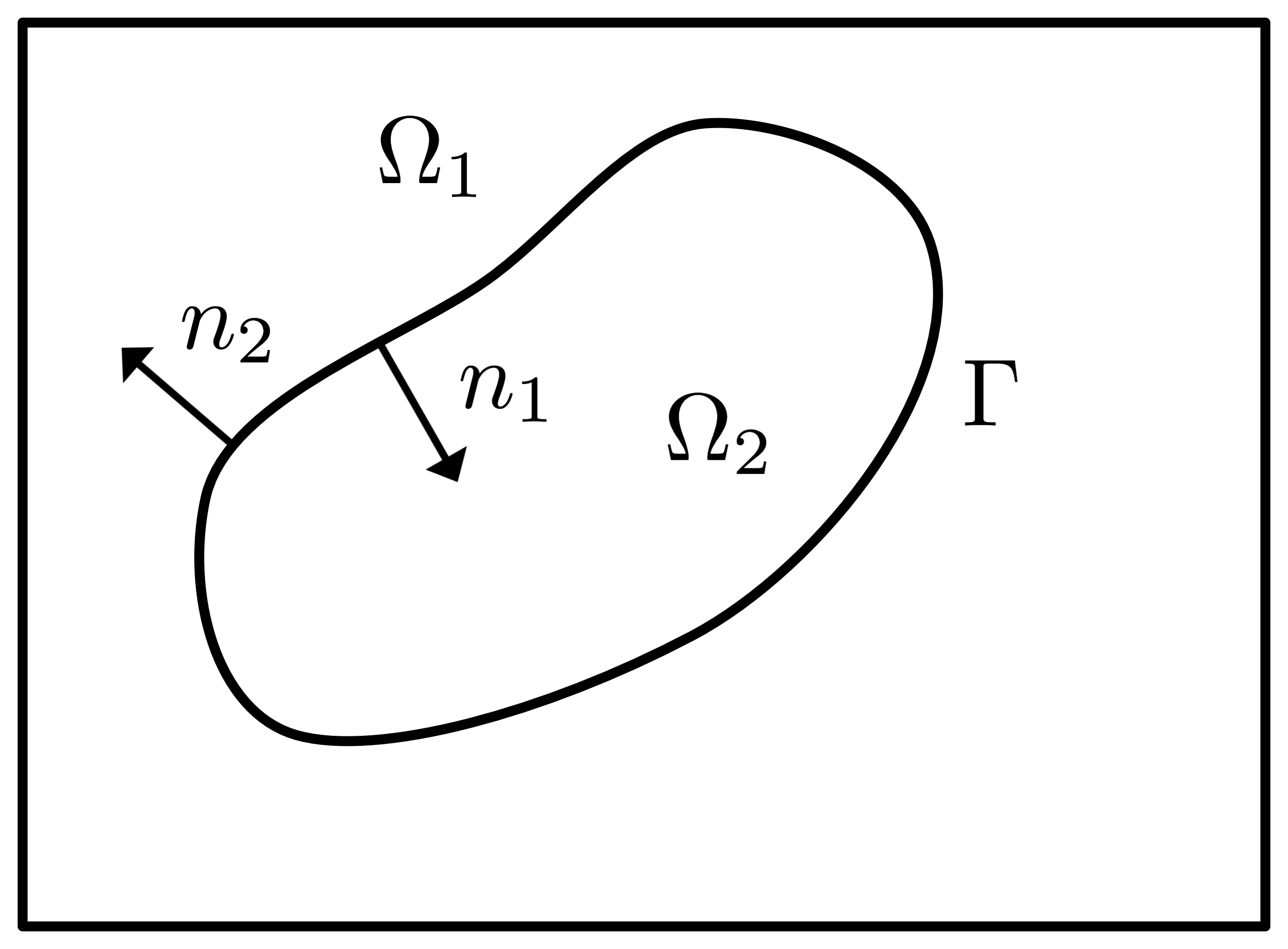}
\end{center}
\caption{\label{fig:domains}
The domains $\Omega_1$, $\Omega_2$, the interface $\Gamma$, and the unit 
exterior normals $n_1$ and $n_2$.}
\end{figure}


\begin{figure}
\begin{center}
\includegraphics[scale=0.2]{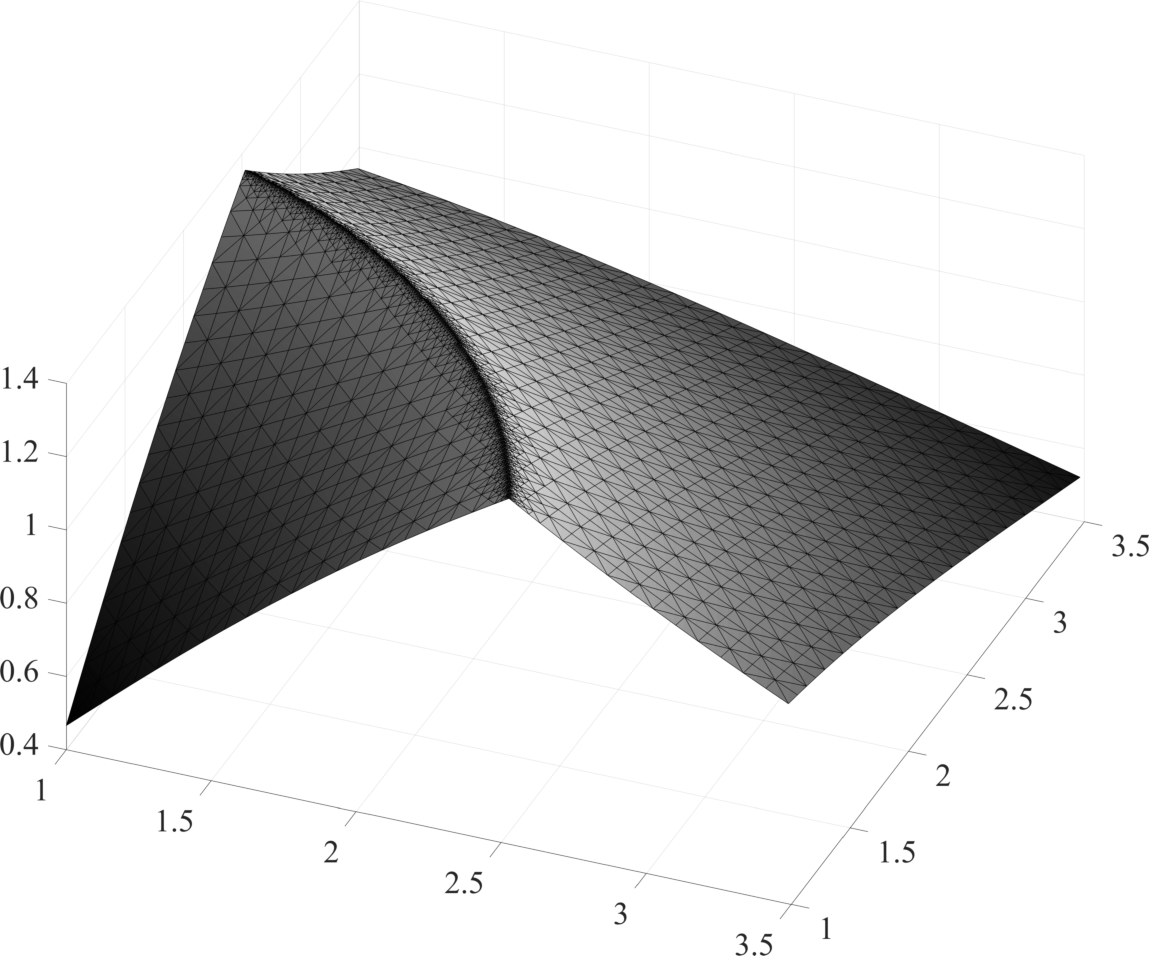}
\end{center}
\caption{Elevation of the solution on a locally refined mesh.\label{elev}}
\end{figure}

\begin{figure}
\begin{center}
\includegraphics[scale=0.3]{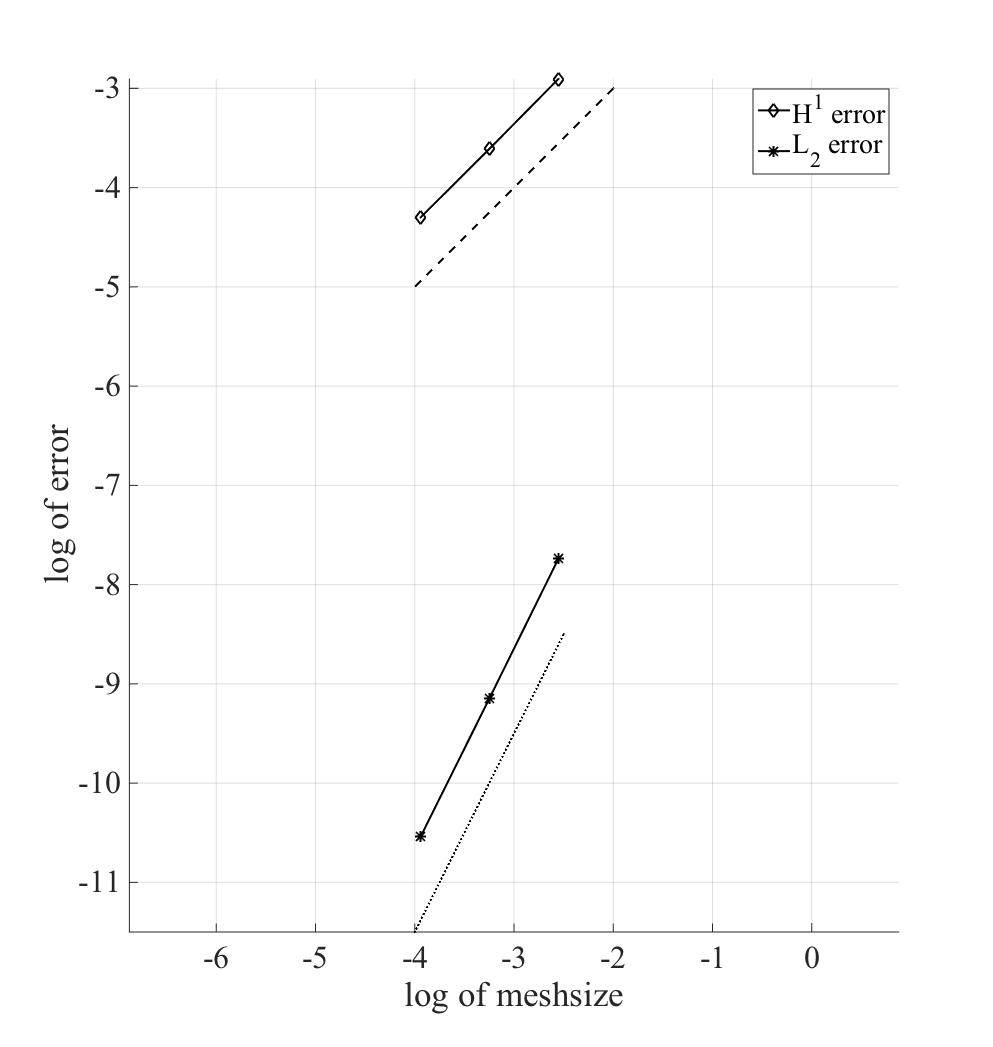}
\end{center}
\caption{Convergence on a locally refined mesh. Dashed line has inclination 1:1, and dotted line 2:1.\label{conv2}}
\end{figure}
\begin{figure}
\begin{center}
\includegraphics[scale=0.3]{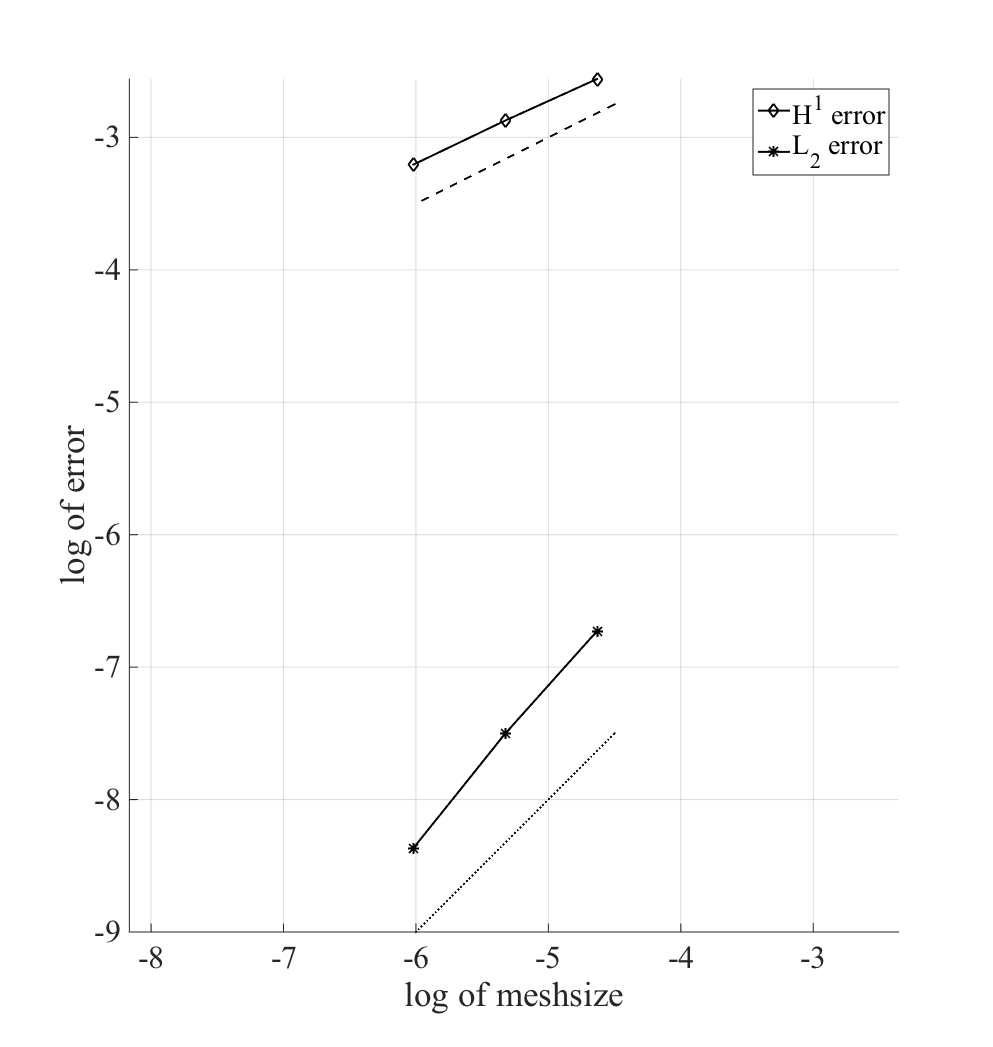}
\end{center}
\caption{Convergence on a globally refined mesh. Dashed line has inclination 1:2, and dotted line 1:1.\label{conv1}}
\end{figure}



\begin{figure}
\begin{center}
\includegraphics[scale=0.4]{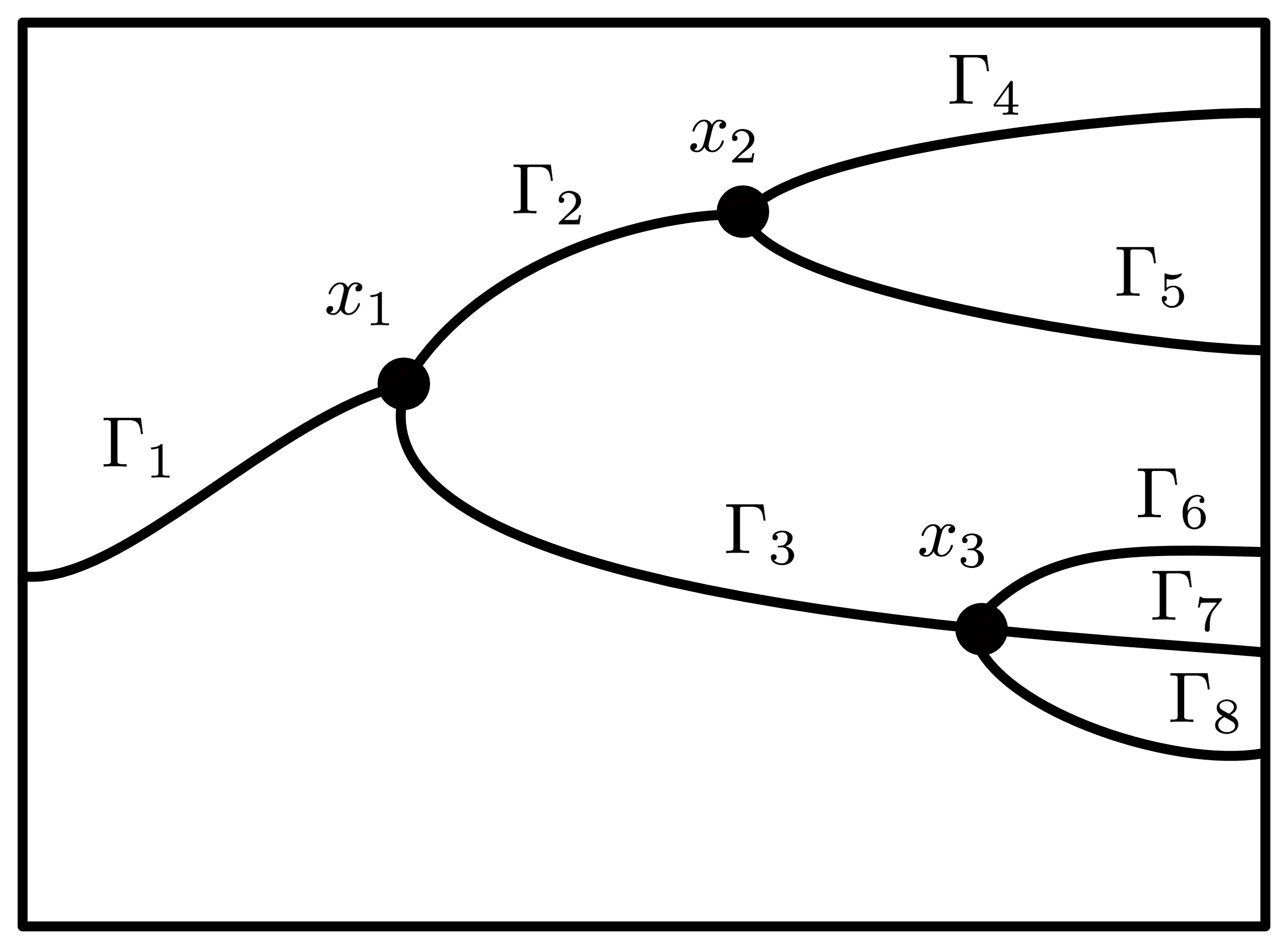}
\end{center}
\caption{\label{fig:bifurcating-layers-notation}
Schematic figure of bifurcating cracks with nodes 
$\mathcal{N}=\{x_i\}_{i=1}^3$ and curves $\mathcal{G}=\{\Gamma_i\}_{i=1}^8$. The connectivity 
is described by the mappings $I_N$ and $I_G$ and we have for instance $I_N(3) = \{1,3\}$ and 
$I_G(2)=\{2,4,5\}$.}
\end{figure}

\begin{figure}
\begin{center}
\includegraphics[scale=0.4]{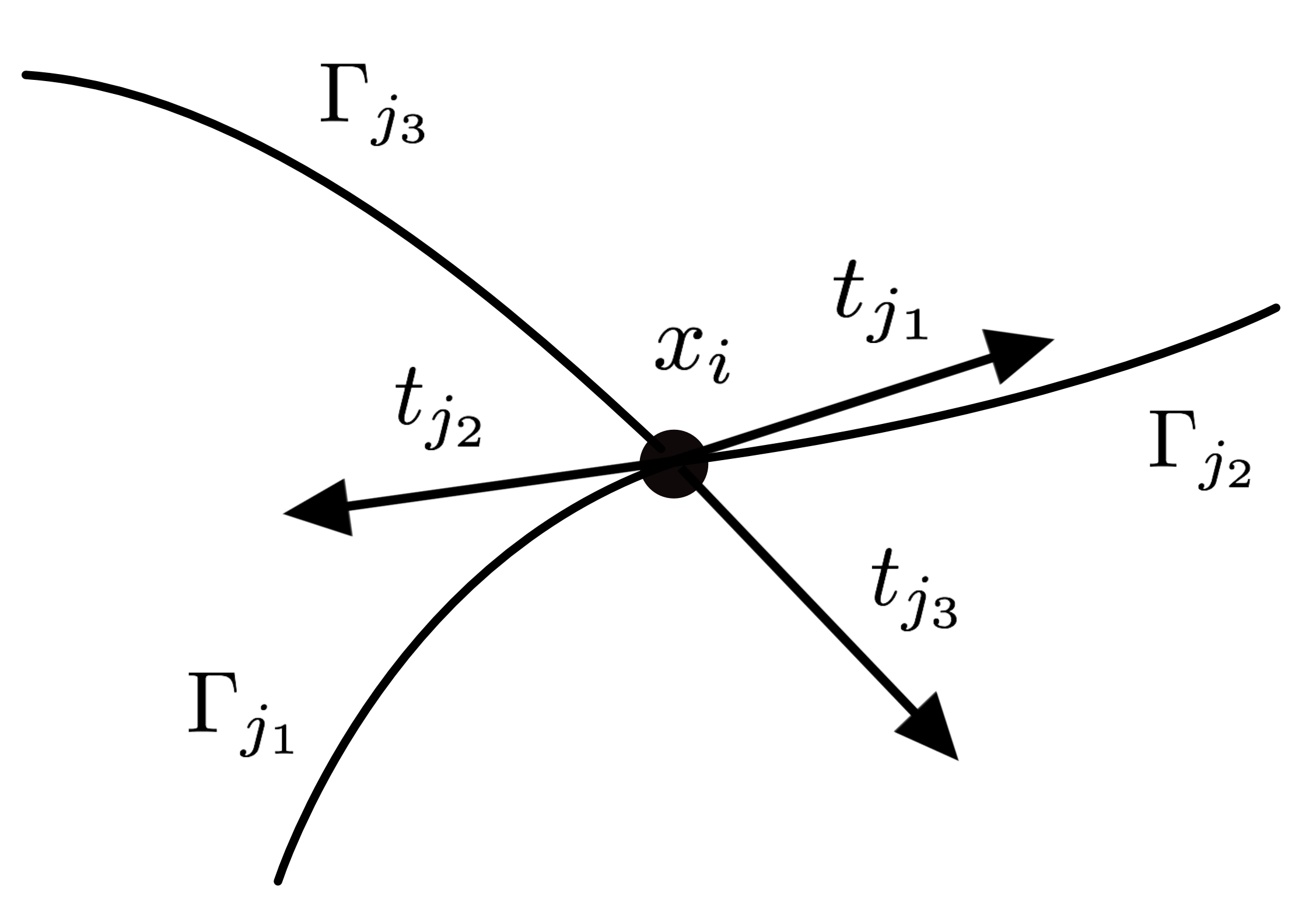}
\end{center}
\caption{\label{fig:nodes-curves-tangents}
Schematic figure of node $x_i$ with its associated three curves 
$\Gamma_k$, and exterior unit tangents $t_k$ at $x_i$ for $k \in I_G(i) = \{j_1,j_2,j_3\}$.}
\end{figure}

\begin{figure}
\begin{center}
\includegraphics[scale=0.2]{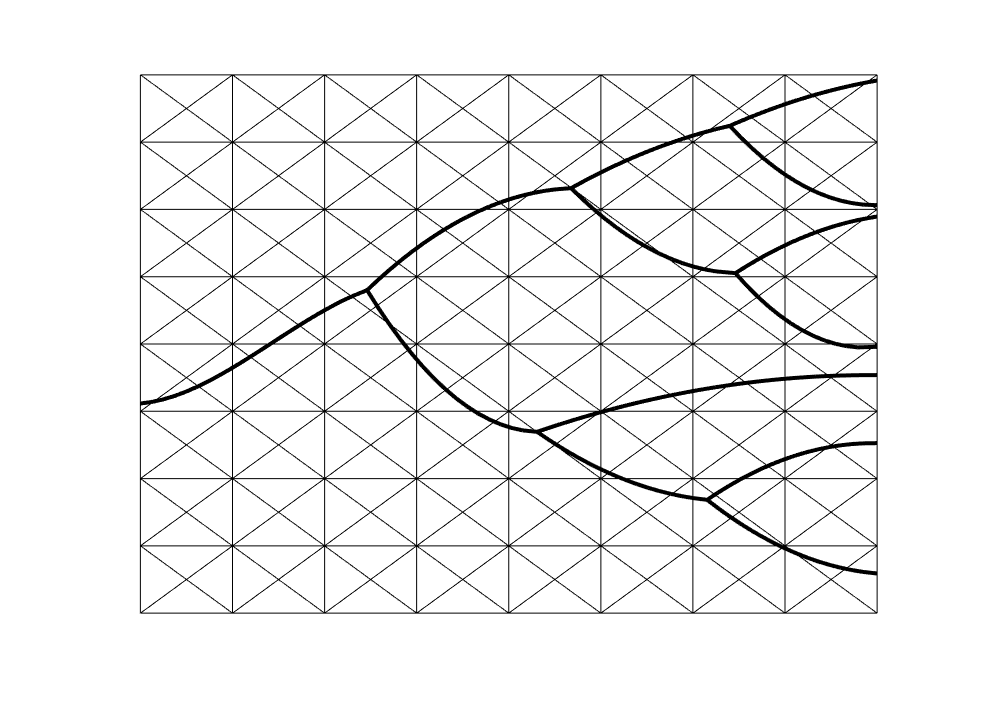}
\includegraphics[scale=0.15]{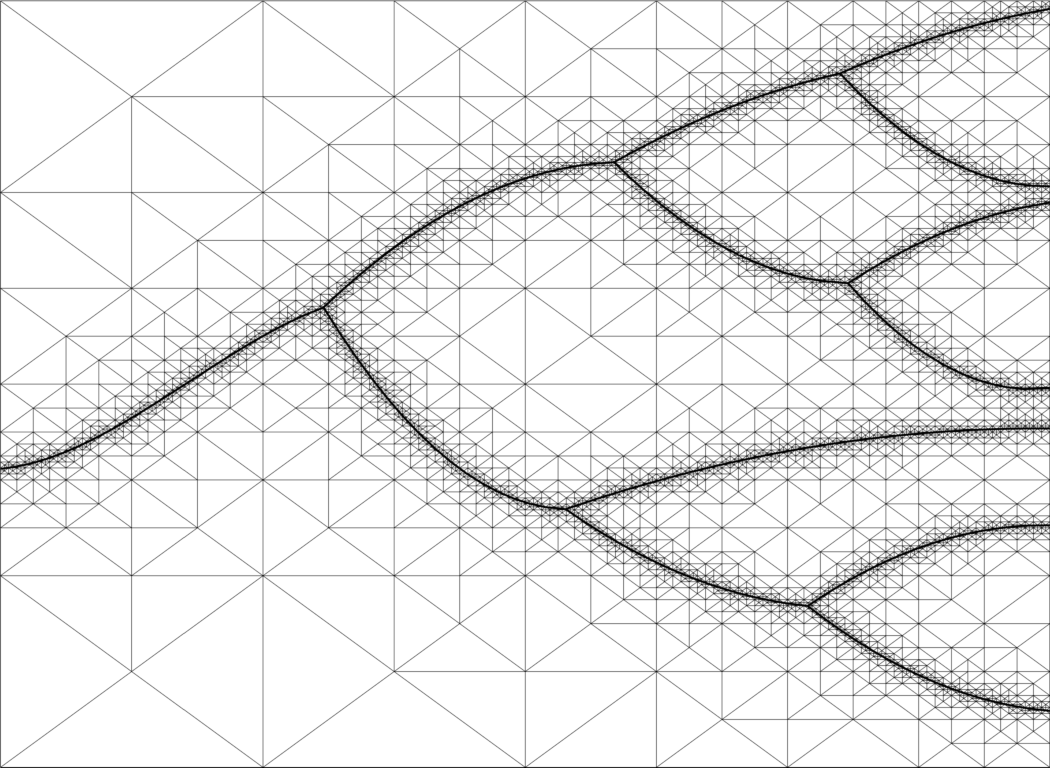}
\end{center}
\caption{Crack pattern modelled on a coarse and a locally refined mesh.\label{meshes}}
\end{figure}

\begin{figure}
\begin{center}
\includegraphics[scale=0.22]{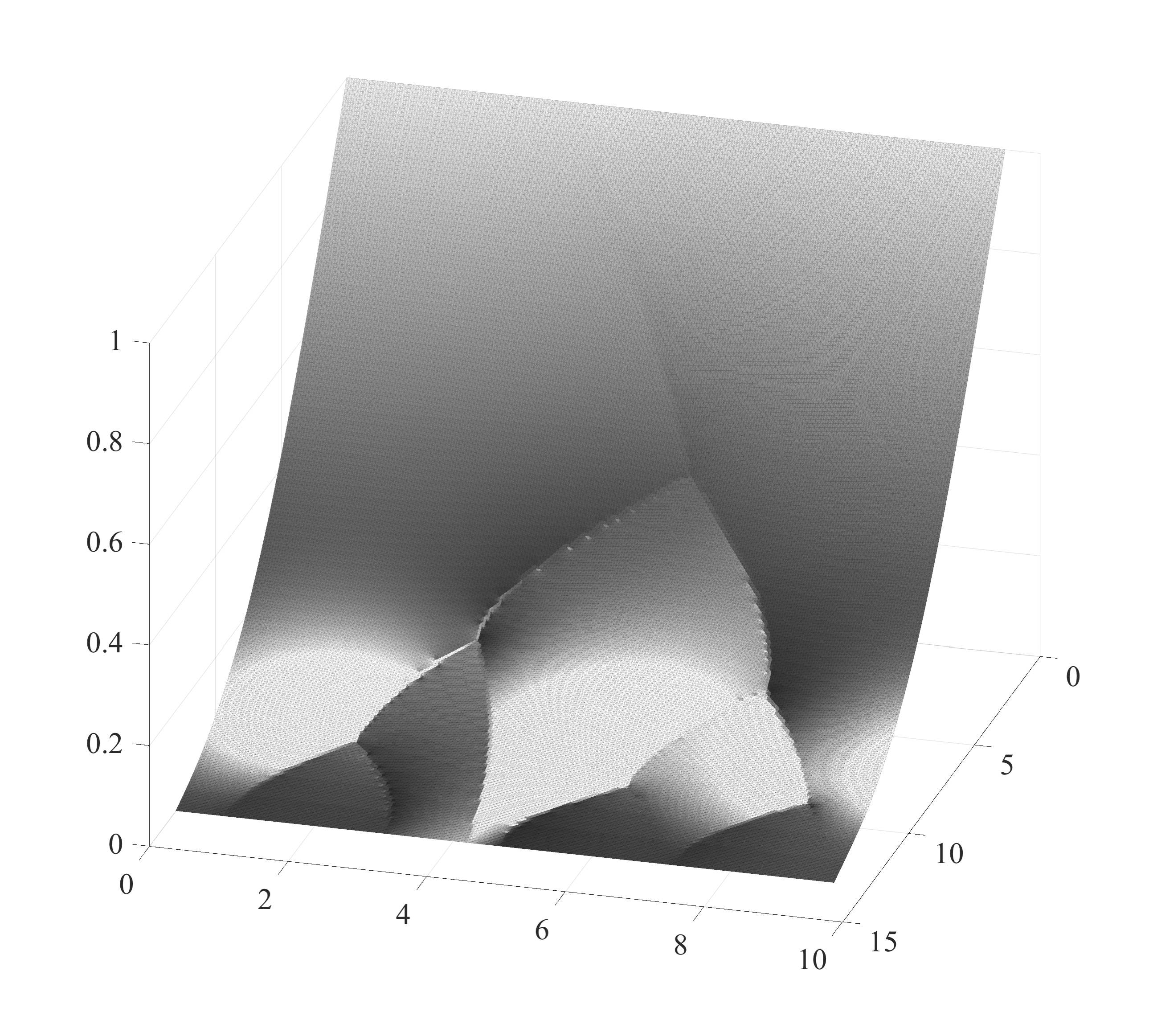}
\includegraphics[scale=0.2]{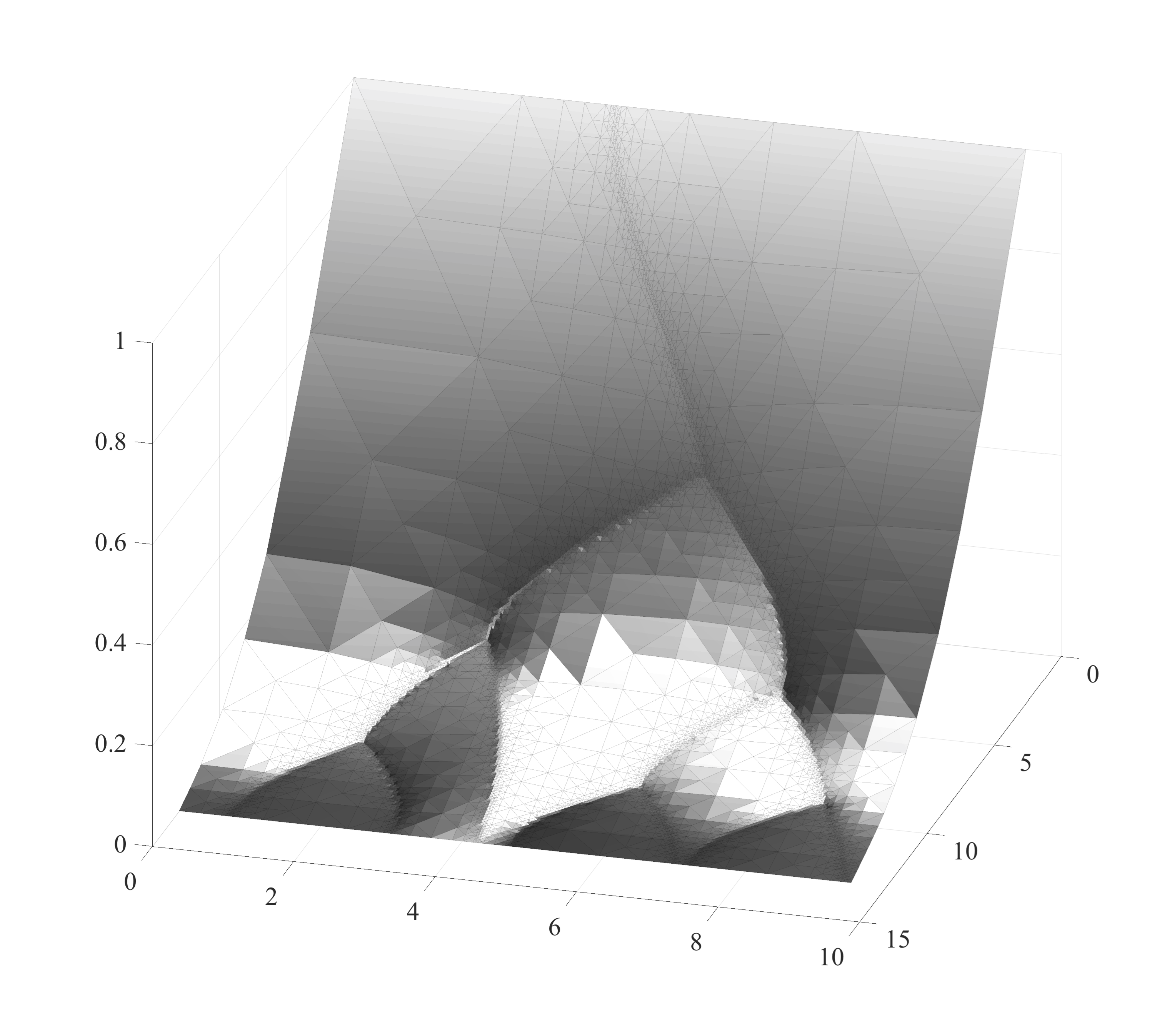}
\end{center}
\caption{Discrete solutions on a coarse and a locally refined mesh.\label{elevations}}
\end{figure}


\begin{thebibliography}{10}

\bibitem{AnBoHu09}
P.~Angot, F.~Boyer, and F.~Hubert.
\newblock Asymptotic and numerical modelling of flows in fractured porous
  media.
\newblock {\em ESAIM: Math. Model. Numer. Anal.}, 43(2):239--275, 2009.

\bibitem{BurClaHan15}
E.~Burman, S.~Claus, P.~Hansbo, M.~G. Larson, and A.~Massing.
\newblock Cut{FEM}: discretizing geometry and partial differential equations.
\newblock {\em Internat. J. Numer. Methods Engrg.}, 104(7):472--501, 2015.

\bibitem{BurHanLar15}
E.~{Burman}, P.~{Hansbo}, and M.~G. {Larson}.
\newblock A cut finite element method with boundary value correction.
\newblock {\em Math. Comp. (to appear, DOI: 10.1090/mcom/3240)}.

\bibitem{BuHaLa17}
E.~Burman, P.~Hansbo, and M.~G. Larson.
\newblock A simple approach for finite element simulation of reinforced plates,
  arXiv:1706.01222, 2017.

\bibitem{CaLuElBa16}
D.~Capatina, R.~Luce, H.~El-Otmany, and N.~Barrau.
\newblock Nitsche's extended finite element method for a fracture model in
  porous media.
\newblock {\em Appl. Anal.}, 95(10):2224--2242, 2016.

\bibitem{CeHaLa16}
M.~Cenanovic, P.~Hansbo, and M.~G. Larson.
\newblock Cut finite element modeling of linear membranes.
\newblock {\em Comput. Methods Appl. Mech. Engrg.}, 310:98--111, 2016.

\bibitem{DASc12}
C.~D'Angelo and A.~Scotti.
\newblock A mixed finite element method for {D}arcy flow in fractured porous
  media with non-matching grids.
\newblock {\em ESAIM: Math. Model. Numer. Anal.}, {46}({2}):{465--489}, {2012}.

\bibitem{DeFuSc17}
M.~Del~Pra, A.~Fumagalli, and A.~Scotti.
\newblock Well posedness of fully coupled fracture/bulk {D}arcy flow with
  {XFEM}.
\newblock {\em SIAM J. Numer. Anal.}, 55(2):785--811, 2017.

\bibitem{FoFuScRu14}
L.~Formaggia, A.~Fumagalli, A.~Scotti, and P.~Ruffo.
\newblock A reduced model for {D}arcy's problem in networks of fractures.
\newblock {\em ESAIM: Math. Model. Numer. Anal.}, 48(4):1089--1116, 2014.

\bibitem{FrRoJeSa08}
N.~Frih, J.~E. Roberts, and A.~Saada.
\newblock Modeling fractures as interfaces: a model for {F}orchheimer
  fractures.
\newblock {\em Comput. Geosci.}, 12(1):91--104, 2008.

\bibitem{HaAsDaEiHe09}
H.~H{\ae}gland, A.~Assteerawatt, H.~K. Dahle, G.~T. Eigestad, and R.~Helmig.
\newblock {Comparison of cell- and vertex-centered discretization methods for
  flow in a two-dimensional discrete-fracture-matrix system}.
\newblock {\em Adv. Water Resour.}, {32}({12}):{1740--1755}, {2009}.

\bibitem{HanHanLar}
A.~Hansbo, P.~Hansbo, and M.~G. Larson.
\newblock A finite element method on composite grids based on {N}itsche's
  method.
\newblock {\em ESAIM: Math. Model. Numer. Anal.}, 37(3):495--514, 2003.

\bibitem{HanJonLarLar17}
P.~{Hansbo}, T.~{Jonsson}, M.~G. {Larson}, and K.~{Larsson}.
\newblock {A Nitsche method for elliptic problems on composite surfaces}.
\newblock {\em Comput. Methods Appl. Mech. Engrg. (to appear, DOI:
  10.1016/j.cma.2017.08.033)}.

\bibitem{MaJaRo05}
V.~Martin, J.~Jaffr\'e, and J.~E. Roberts.
\newblock Modeling fractures and barriers as interfaces for flow in porous
  media.
\newblock {\em SIAM J. Sci. Comput.}, 26(5):1667--1691, 2005.

\end{thebibliography}
\end{document}